\newenvironment{purple}{\color{magenta}}{}
\definecolor{darkgreen}{rgb}{0,0.5,0}
\numberwithin{equation}{section}
\newtheorem{thm}[equation]{\sc Theorem}
\newtheorem{lem}[equation]{\sc Lemma}
\newtheorem{prop}[equation]{\sc Proposition}
\newtheorem{alg}[equation]{\sc Algorithm}
\newtheorem{rem}[equation]{\sc Remark}
\newtheorem{defin}{\it Definition}
\newtheorem{ex}{\it Example}
\renewcommand{\@seccntformat }[1]{\csname the#1\endcsname. }
\chardef\xnearrow='045
\chardef\xnwarrow='055
\chardef\xsearrow='046
\chardef\xswarrow='056
 \newcommand\Hom{\mbox{\rm Hom}}
 \newcommand\Ker{\mbox{\rm Ker}}
 \newcommand\Coker{\mbox{\rm Coker}}
  \newcommand\Ext{\mbox{\rm Ext}}
\newcommand\homleq{\leq_{\rm hom}}
\newcommand\degleq{\leq_{\rm deg}}
\begin{document}
\thispagestyle{empty}
\color{black}
\phantom m\vspace{-2cm}

\bigskip\bigskip
\begin{center}
{\large\bf Combinatorial algorithms for binary operations on LR-tableaux with entries equal to 1 with applications to nilpotent linear operators
}
\end{center}

\smallskip

\begin{center}
  Mariusz Kaniecki and Justyna  Kosakowska
                  \\
                  \vspace{0.5cm}
   Faculty of Mathematics and Computer Science,\\
    Nicolaus Copernicus University in Toru\'n\\
    Chopina 12/18, 87-100 Toru\'n, Poland \\
   kanies@mat.umk.pl; justus@mat.umk.pl

\end{center}



\begin{abstract}
In the paper we investigate an~algorithmic associative binary operation $*$ on the set
$\mathcal{LR}_1$ of Littlewood-Richardson tableaux with entries equal to one. We extend $*$ to an~algorithmic nonassociative binary operation on the set 
$\mathcal{LR}_1\times \mathbb{N}$ and show
that it is equivalent to the operation of taking the generic extensions of objects in the category
of homomorphisms from semisimple nilpotent linear operators to   nilpotent linear operators.  Thus we get a~combinatorial algorithm computing generic extensions in this category.
\end{abstract}

\noindent{\small\bf Key words:}\ \ 
{Littlewood-Richardson tableaux, partitions, invariant subspaces, nilpotent linear operators, generic extensions, pickets, degeneration partial order, combinatorial algorithms \rm }\vspace{-0.3cm}

\section{Introduction}

The main aim of the paper is to generalize results presented in \cite{kk-generic}, where
an~associative operation $*$ on the set
$\mathcal{LR}_1$ of Littlewood-Richardson tableaux (LR-tableaux) with entries equal to one is defined. 
It is proved  that $*$ is associative and it is equivalent to the operation of taking the generic extension of semisimple invariant subspaces of nilpotent linear operators, see \cite[Lemma 5.12]{kk-generic}.

We extend the operation $*$ to a~nonassociative operation on the set $\mathcal{LR}_1\times \mathbb{N}$ and investigate its properties. In particular, in Theorem \ref{thm-main-gen},  we prove that this operation is equivalent to the operation of taking the generic extensions of objects in the category $\mathcal{H}_1$
of homomorphisms from semisimple nilpotent linear operators to   nilpotent linear operators. In particular this gives a~combinatorial algorithm that applying operations on LR-tableaux computes generic extensions in $\mathcal{H}_1$.

  We are motivated by results presented in \cite{kos-sch,kos-sch2}, where there are investigated  relationships 
  between Littlewood-Richardson tableaux and  geometric properties of invariant subspaces of nilpotent linear operators.
 It is proved there that these relationships  are deep and interesting.  On the other hand, in \cite{reineke}, the existence of generic extensions for 
  Dynkin quivers is proved and their connections  with Hall algebras are investigated. 
 Moreover, by results presented in
\cite{bongartz,dengdu,dengdumah,reineke} generic extensions of nilpotent linear operators 
exist and the operation of taking the generic
extension provides the set of all isomorphism classes of
nilpotent linear operators with a~monoid structure. There are many results 
concerning this monoid and its properties (see \cite{dengdu,dengdumah,hubery,kos,reineke}).

The paper is organized as follows.
\begin{itemize}
    \item In Section \ref{sec-def-alg} we define algorithmically  a~binary operation $*$
    on the set $\mathcal{LR}_1\times \mathbb{N}$ and illustrate it by examples. Moreover we present an example showing that $*$ is non-associative.
    \item Section \ref{sec-invariant} contains basic definitions and facts concerning the category of homomorphisms between nilpotent linear operators. We also recall there definitions of generic extensions and hom-order.
    \item In Section  \ref{sec-generic} we prove that the operation $*$ is equivalent to the operation of taking generic extensions in some subcategory of the category of homomorphisms between nilpotent linear operators (Theorem \ref{thm-main-gen}). 
\end{itemize}

\section{Operations on $\mathcal{LR}_1$ and on $\mathcal{LR}_1\times \mathbb{N}$} \label{sec-def-alg}

The aim of this section is to present algorithmic  definitions of combinatorial binary operations on the sets 
$\mathcal{LR}_1$ and $\mathcal{LR}_1\times \mathbb{N}$. Later these operations are applied to computing generic extensions in some category of homomorphisms of nilpotent linear operators. 

Let $\alpha = (\alpha_1, \ldots , \alpha_n,\ldots)$ be a partition (i.e. a sequence of non-negative
integers containing only finitely many non-zero terms and such that $\alpha_1\geq\alpha_2\geq \ldots \geq \alpha_n\geq\ldots).$ Denote by $\overline{\alpha} = (\overline{\alpha}_1,\ldots ,\overline{\alpha}_n,\ldots)$ the dual partition of $\alpha$, i.e.
$\overline{\alpha}_i = \#\{j; \alpha_j \geq i\},$
where $\#X$ denotes the cardinality of a~finite set $X$.
Moreover let $|\alpha|=\alpha_1+\alpha_2+\ldots$. Given partitions $\alpha$ ,$\beta$ we denote
by $\alpha\cup \beta$ the union of these partitions,
i.e. the multiset of parts of  $\alpha\cup \beta$ is the union of multisets of parts of $\alpha$ and $\beta$.

Let $(\alpha,\beta,\gamma)$ be a~partition triple such that $\alpha_1\leq 1$. An~{\bf LR-tableau} of type $(\alpha,\beta,\gamma)$ is  a~skew diagram of shape $\beta\setminus \gamma$ with $|\alpha|$ entries all equal to $1$
and such that $\beta\setminus\gamma$ is a~horizontal strip, i.e. $\gamma_i\leq\beta_i\leq\gamma_i+1$ for all $i$. By $\mathcal{LR}_1$ we denote the set of all LR-tableaux with entries equal to $1$.
Note that  an~LR-tableau of type $(\alpha,\beta,\gamma)$ and with entries equal to one
is uniquely determined by the partitions $\beta,\gamma$.
We will identify an~LR-tableau $X\in\mathcal{LR}_1$ with the corresponding 
pair $(\gamma^X, \beta^X)$ of partitions. 

By $\emptyset$ we denote the LR tableau such that $\beta^\emptyset=\gamma^\emptyset=(0)$.

\begin{ex}
The following LR-tableau with entries equal to $1$
is uniquely determined by partitions $\beta^X=(7,7,5,2,2,1)$
and $\gamma^X=(7,6,4,1,1,1)$:
$$X=\ytableausetup{centertableaux}
\ytableausetup{smalltableaux}
\ytableaushort
{\none\none\none\none\none\none,\none\none\none11,\none\none\none,\none\none\none,\none\none1,\none\none,\none1}
* {6,5,3,3,3,2,2}
$$
\end{ex}

Below we present algorithmic definition of operations
$$
*\; :\; \mathcal{LR}_1 \times \mathcal{LR}_1 \to \mathcal{LR}_1
$$
and 
$$
*\; :\; (\mathcal{LR}_1\times\mathbb{N}) \times (\mathcal{LR}_1\times\mathbb{N}) \to (\mathcal{LR}_1\times\mathbb{N}).
$$

The following algorithm is given in \cite{kk-generic} and defines the operation 
$*\; :\; \mathcal{LR}_1 \times \mathcal{LR}_1 \to \mathcal{LR}_1
$. 

\begin{alg}\label{algorithm}

{\rm 
{\bf Input.}  $X,Y\in \mathcal{LR}_1$. 

{\bf Output.} $Z=Y*X\in \mathcal{LR}_1$.

\begin{enumerate}

  \item $\gamma^Z=\gamma^X+\gamma^Y$

 \item  set $n_0=0$

\item for any $i=1,\ldots,\min \{\overline{\beta^X_1},\overline{\beta^Y_1}\},$ do
  \begin{enumerate}
   \item put $\beta^Z_i=\beta^X_i+  \gamma^Y_i$  
   \item if $\beta^Y_i\neq \gamma^Y_i$, then put $n_i=n_{i-1}+1$; else put $n_i=n_{i-1}$  
  \end{enumerate}

\item if $\overline{\beta_1^X}>\min \{\overline{\beta^X_1},\overline{\beta^Y_1}\}$, then
for $i=\min \{\overline{\beta^X_1},\overline{\beta^Y_1}\}+1,\ldots,\overline{\beta_1^X}$ put
$$\beta^Z_i=\beta^X_i \mbox{ and } n_i=n_{i-1},$$
  else
for $i=\min \{\overline{\beta^X_1},\overline{\beta^Y_1}\}+1,\ldots,\overline{\beta_1^Y}$
we set
\begin{enumerate}
 \item if ($\gamma_i^Y=\beta_i^Y\,
{\rm and}\,n>0$) then ($\beta_i^Z=\beta_i^Y+1$ and 
 $n_i=n_{i-1}-1$); else ($\beta_i^Z=\beta_i^Y$ and 
 $n_i=n_{i-1}$)  
\end{enumerate}


\item We set \begin{equation*}
\beta^Z=\beta^Z\cup\alpha
\end{equation*}
where $\alpha=(1,1,\ldots,1)$ is a partition with $n_s$ copies of $1$, where $s=\max \{\overline{\beta^X_1},\overline{\beta^Y_1}\}$.
\end{enumerate}
}
\end{alg}

\begin{ex}

Let 
$$X=\ytableausetup{centertableaux}
\ytableaushort
{\none\none\none\none1,\none\none\none\none,\none\none11,\none\none,1}
* {5,4,4,2,1}\quad
Y=\ytableausetup{centertableaux}
\ytableaushort
{\none\none\none\none\none\none,\none\none\none1,\none\none,1}
* {6,4,2,1}$$
Then $\beta^X=(5,4,3,3,1),\gamma^X=(4,4,2,2)$, $\beta^Y=(4,3,2,2,1,1),\gamma^Y=(3,3,2,1,1,1).$
The Step 1 of Algorithm \ref{algorithm} gives 
$$\gamma^Z=\gamma^X+\gamma^Y=(7,7,4,3,1,1).$$
In our case $\min\{\overline{\beta^X_1},\overline{\beta^Y_1}\}=5$ and therefore,
for $i=1,2,3,4,5$, the Step 3 of Algorithm \ref{algorithm} sets $\beta^Z_i=\beta^X_i+\gamma^Y_i$.
We have
$$\beta_1^Z=8,\, \beta_2^Z=7,\, \beta_3^Z=5,\,\beta_4^Z=4,\, \beta_3^Z=2.$$
Note that $n_5=2.$

For $i>5=\overline{\beta^X_1}=\min \{\overline{\beta^X_1},\overline{\beta^Y_1}\}$ the Step 4 of the algorithm gives:
%
%

$$\beta_6^Z=\beta^Y_6+1=2\;{\rm and}\;n_6=n_5-1=1\, ,\;{\rm because}\;\gamma_6^Y=\beta_6^Y.$$

Coefficients of partition $\beta^Z$ for $i=1,2,\ldots,\max \{\overline{\beta^X_1},\overline{\beta^Y_1}\}=s$ are constructed,
but $n_s=1\neq 0$. Therefore the step 5 of Algorithm \ref{algorithm} sets $\beta^Z=\beta^Z\cup (1).$

Finally, we get $\beta^Z=(8,7,5,4,2,2,1)$, $\gamma^Z=(7,7,4,3,1,1),$
and 
$$Z=\ytableausetup{centertableaux}
\ytableaushort
{\none\none\none\none\none\none1,\none\none\none\none11,\none\none\none\none,\none\none\none1,\none\none1,\none\none,\none\none,1}
* {7,6,4,4,3,2,2,1}$$
\end{ex}

The following algorithm is the first part of the definition of 
$
*\; :\; (\mathcal{LR}_1\times\mathbb{N}) \times (\mathcal{LR}_1\times\mathbb{N}) \to (\mathcal{LR}_1\times\mathbb{N}).
$.

\begin{alg}\label{algorithm1}
{\rm 
{\bf Input.}  $X\in \mathcal{LR}_1$. $n\in\mathbb N$.

{\bf Output.}  $(Z,m)=(\emptyset,n)*(X,0)\in \mathcal{LR}_1\times \mathbb{N}$.
\begin{enumerate}
\item let $L=[\;]$ be an empty list
 \item  for any $i=1,\ldots,\overline{\beta}^X_1$ do
\begin{enumerate}
\item if $\beta^X_i=\gamma^X_i$, then add $i$ as the last element of the list $L$
\end{enumerate}
 \item for any $i=\overline{\beta}^X_1,\ldots,1,$ do
\begin{enumerate}
\item if $i\in L$ and $n>0$, then set 
$$\beta_i^Z=\beta_i^X,\,\gamma_i^Z=\gamma_i^Z-1\, {\rm \; and\;}\, n=n-1,$$ 
else set
$$\beta_i^Z=\beta_i^X\,{\rm \; and\;}\,\gamma_i^Z=\gamma_i^Z.$$
\end{enumerate}
\item the result is $(Z,n)$.	
\end{enumerate}}
\end{alg}

\begin{ex}
Let  
$$X=\ytableausetup{centertableaux}
\ytableaushort
{\none\none\none\none\none,\none\none\none\none,\none\none\none1,\none\none,1}
* {5,4,4,2,1}
$$
Note that $\beta^X=(5,4,3,3,1)$ and $\gamma^X=(4,4,3,2,1).$
The list $L$ has a form $L=[2,3,5].$

\begin{enumerate}
\item First we illustrate the algorithm for $n=2.$

Note that $5\in L$ and $n>0$, so we put 
$$\beta_5^Z=\beta_5^X=1,\quad \gamma_5^Z=\gamma_5^Z-1=0$$ and $n:=n-1=1$.
Since $4\not\in L$, we put   
$$\beta_4^Z=\beta_4^X=3\, {\rm\; and\;}\, \gamma_4^Z=\gamma_4^X=2.$$
We have $3\in L$ and $n>0$, then
$$\beta_3^Z=\beta_3^X=3,\quad \gamma_3^Z=\gamma_3^Z-1=2$$ and $n:=n-1=0.$
Since $n=0$,  we have:
$$\beta_2^Z=\beta_2^X=4\, {\rm and}\, \gamma_2^Z=\gamma_2^X=3,$$
$$\beta_1^Z=\beta_1^X=5\, {\rm and}\, \gamma_1^Z=\gamma_1^X=5.$$
Finally 
$\beta^Z=(5,4,3,3,1)$, $\gamma^Z=(4,4,2,2)$, $n=0$ and the result is $(Z,0),$ where
$$Z=\ytableausetup{centertableaux}
\ytableaushort
{\none\none\none\none1,\none\none\none\none,\none\none11,\none\none,1}
* {5,4,4,2,1}
$$

\item Note that applying this procedure to $X$ and  $n=5$ we get the result $(Z,2)$, where
$$Z=\ytableausetup{centertableaux}
\ytableaushort
{\none\none\none\none1,\none\none\none\none,\none\none11,\none1,1}
* {5,4,4,2,1}
$$

\end{enumerate}
\end{ex}

\begin{rem}\label{rem-algorithm1} By an~empty column we mean a~column of arbitrary length that has no entry.
Let $X\in \mathcal{LR}_1$ have $m$ empty columns and let $n\in\mathbb{N}$.
Note that Algorithm \ref{algorithm1} produces the pair $(Z,k)$, where $Z$ is created by  putting $\min\{m,n\}$ elements $1$ in empty columns of $X$ starting from the right most column. If $n$ is bigger than $m$, then $k=n-m$ and otherwise $k=0$.
\end{rem}

Below we give an~algorithmic definition of the operation $
*\; :\; (\mathcal{LR}_1\times\mathbb{N}) \times (\mathcal{LR}_1\times\mathbb{N}) \to (\mathcal{LR}_1\times\mathbb{N}).
$.

\begin{alg}\label{algorithm2}
{\rm
{\bf Input. } $(X,m),(Y,n)\in\mathcal{LR}_1\times\mathbb{N}$

{\bf Output.} $(Z,k)=(Y,n)*(X,m)\in \mathcal{LR}_1\times\mathbb{N}$.
\begin{enumerate}
\item Apply Algorithm \ref{algorithm1} to $X$ and $n$ and get $(T,s)=(\emptyset,n)*(X,0)$
\item Apply Algorithm \ref{algorithm} to $T$ and $Y$ and get $Z=Y* T$.
\item The result is $(Z,s+m)$.
\end{enumerate}
}
\end{alg}

\begin{ex}

Let 
$$X=\ytableausetup{centertableaux}
\ytableaushort
{\none\none\none\none\none,\none\none\none\none,\none\none\none1,\none\none,1}
* {5,4,4,2,1}\quad
Y=\ytableausetup{centertableaux}
\ytableaushort
{\none\none\none\none\none\none,\none\none\none1,\none\none,1}
* {6,4,2,1}$$
and $m=4,\,n=2.$

In the first step we apply Algorithm \ref{algorithm1} to $X$ and $n$ so we obtain 
$$T=\ytableausetup{centertableaux}
\ytableaushort
{\none\none\none\none1,\none\none\none\none,\none\none11,\none\none,1}
* {5,4,4,2,1}$$ 
and $s=0$ (see example to Algorithm \ref{algorithm1}).

Next by Algorithm \ref{algorithm} for $T$ and $Y$ we obtain the LR-tableau $Z$ (see example to Algorithm \ref{algorithm}).

$$Z=\ytableausetup{centertableaux}
\ytableaushort
{\none\none\none\none\none\none1,\none\none\none\none11,\none\none\none\none,\none\none\none1,\none\none1,\none\none,\none\none,1}
* {7,6,4,4,3,2,2,1}.$$

Finally the result is $(Z,4).$
\end{ex}

\begin{ex} Recall that the operation $*$ defined on $\mathcal{LR}_1$ is associative, see \cite[Lemma 6.4]{kk-generic}.  Now we show that the operation 
$*$ defined on $\mathcal{LR}_1\times \mathbb{N}$ is non-associative. 
Let $N=(\emptyset,1),M=(\ytableausetup{centertableaux}
\ytableaushort
{\none} *{1},0)$.
Then applying Algorithm \ref{algorithm2} we obtain:
$$N*M=(\ytableausetup{centertableaux}
\ytableaushort
{1}*{1},0)\quad {\rm and}\quad M*M=(\ytableausetup{centertableaux}
\ytableaushort
{\none,\none}*{1,1},0)$$
and again applying Algorithm \ref{algorithm2}:
$$(N*M)*M=(\ytableausetup{centertableaux}
\ytableaushort
{\none1}*{2},0)\quad {\rm and}\quad N*(M*M)=(\ytableausetup{centertableaux}
\ytableaushort
{\none,1}*{1,1},0).$$

\end{ex}

\section{The category of homomorphisms of nilpotent linear operators} \label{sec-invariant}

Let $k$ be an~arbitrary field. For a partition $\alpha=(\alpha_1\geq\ldots\geq\alpha_n)$
we denote by $N_\alpha=N_\alpha(k)$ {\bf the nilpotent linear operator of type} $\alpha$, 
i.e. the finite dimensional $k[T]$-module
$$N_\alpha(k)=k[T]/(T^{\alpha_1})\oplus\ldots\oplus k[T]/(T^{\alpha_n}),$$
where $k[T]$ is the $k$-algebra of polynomials with one variable $T$ and $\oplus$ is the~direct sum of $k[T]$-modules.

Denote by $\mathcal{H}=\mathcal{H}(k)$
the category of systems $(N_\alpha,N_\beta,f),$
where $\alpha$, $\beta$ are partitions and $f: N_\alpha\to N_\beta$ is
a~$k[T]$-homomorphism.
Let  $M=(N_\alpha,N_\beta,f)$, $M'=(N_{\alpha'},N_{\beta'},f')$ be objects of
$\mathcal{H}$.
A~morphism $\phi:M\to M'$ is a~pair $(\phi_1,\phi_2)$, where
$\phi_1:N_\alpha\to N_{\alpha'}$, $\phi_2:N_\beta\to N_{\beta'}$ are homomorphisms
of $k[T]$-modules such that $f'\phi_1=\phi_2f$.
Denote by $\mathcal S$ or $\mathcal{S}(k)$ the full subcategory of
$\mathcal{H}$ consisting
of all systems $(N_\alpha,N_\beta,f)$,
where $f$ is a~monomorphism.
For a natural number $n$,
we write $\mathcal H_n$ or $\mathcal{H}_n(k)$ for the full subcategory of $\mathcal{H}$
of all systems $(N_\alpha,N_\beta,f)$ such that $\alpha_1\leq n$. In particular, objects of $\mathcal{H}_1$ are such that $N_\alpha$
is a~semisimple $k[T]$-module.
We will denote by $\mathcal{H}_a^b$ the full subcategory of $\mathcal{H}_1$ consisting of all systems $(N_\alpha,N_\beta,f)$ where $|\alpha|=a$ and $|\beta|=b$.
For a natural number $n$,
we write $\mathcal S_n$ or $\mathcal S_n(k)$ for the full subcategory of $\mathcal S$ consisting of objects from $\mathcal{S}\cap \mathcal{H}_n$.

\subsection{Pickets}
\label{section-pickets}

The  category $\mathcal{H}_1(k)$ is of particular interest for us
in this paper. We shortly describe its properties.
An~object $M=(N_\alpha,N_\beta,f_M)\in \mathcal{H}_1$ induces the following short exact sequence:

$$\xymatrix{0\ar[r]&0\ar[rr]&& N_\beta\ar[rr]^-{id_{N_\beta}} &&N_\beta \ar[r] & 0\\
0\ar[r]& \Ker f_M \ar[u]\ar[rr]^-{u}&& N_\alpha \ar[u]^-{f_M}
\ar[rr]^-{p} && \Coker\, u\ar[u]^-{\overline{f_M}}\ar[r]&0},$$
where $u$ is the natural embedding and $p$ is the canonical epimorphism.
Since  $(N_\alpha,N_\beta,f_M)\in \mathcal{H}_1$, the module $N_\alpha$ is semisimple and therefore 
the lower row splits. So we obtain that $M\simeq M'\oplus M''$ where $M'=(\Ker f_M,0,0)$ and $M''=(\Coker\, u, N_\beta,\overline{f_M})\in \mathcal{S}_1$, i.e. $M'\oplus M''=(\Ker f_M\oplus \Coker\, u ,0\oplus N_\beta,0\oplus \overline{f_M})$.


We present a~description of indecomposable objects in $\mathcal{H}_1$. An object $(N_\alpha,N_\beta,f)$ of the category $\mathcal H$ such that $\beta=(m)$ we call a~{\bf picket}.
First we note that any object of the form $M'=(N_{\alpha},0,0)\in\mathcal{H}_1$ is isomorphic to a direct sum of indecomposable objects of the form:
$$P_1^0=(N_{(1)},0,0).$$ 
It is because $\alpha=(1,1\ldots,1)$ and thus $N_\alpha$ is a~semisimple $k[T]$-module.

Each indecomposable object of $\mathcal{S}_1$ is
isomorphic to a~picket that is, it has the form
$$P_0^m=(0,N_{(m)},0)$$
or
$$P_1^m=(N_{(1)},N_{(m)},\iota)$$
where $\iota(1)=T^{(m-1)}$, see \cite{bhw}.
Whenever we want to emphasize the dependence on the field $k$, we will write $P_\ell^m=P_\ell^m(k)$.

Thanks to this classification we can associate with any object $M$ in $\mathcal{S}_1(k)$ the~LR-tableau $\Gamma(M)\in\mathcal{LR}_1$ and with any object $M\simeq M'\oplus (P_1^0)^m$ in $\mathcal{H}_1(k)$, where
 $M'\in\mathcal{S}_1(k)$, the pair $$\widehat{\Gamma}(M)=(\Gamma(M),m)\in\mathcal{LR}_1\times \mathbb{N},$$
 where $\Gamma(M)=\Gamma(M').$
 First we list LR-tableaux associated with pickets in the following table.

\begin{center}
\begin{tabular}{|c|c|c|}\hline
\multicolumn3{|c|}
             {\raisebox{-1ex}[0mm]{\bf LR-tableaux
                  for  the indecomposable objects in $\mathcal S_1$}}
             \\[2ex] \hline
X & $P_0^m$ & $P_1^m$ \\
\hline
 $\Gamma(X)$ &$
m\left\{\ytableausetup{centertableaux}\ytableausetup{smalltableaux}\begin{ytableau}
\\
\none \\
\none[\vdots]\\
\none\\
\\
\end{ytableau} \right.
$ &
$m\left\{\ytableausetup{centertableaux}\ytableausetup{smalltableaux}
\begin{ytableau}
\\
\none \\
\none[\vdots]\\
\none\\
\\
1
\end{ytableau}\right.$
\\ \hline
\end{tabular}
\end{center}

The LR - tableau for a direct sum $M \oplus M^\prime$ has a diagram given by the union $\beta\cup \beta^\prime$
 of the partitions representing the ambient spaces, and in each row the entries
are obtained by lexicographically ordering the entries in the corresponding rows
in the tableaux for $M$ and $M'$, with empty boxes coming first. 

\begin{ex}
The object  $M=P_0^7\oplus P_1^7\oplus P_1^5\oplus P_1^2\oplus P_1^2\oplus P_0^1\oplus P_1^0\oplus P_1^0$
corresponds to the $\widehat{\Gamma}(M)=(\Gamma(M),2)$, where 
$$\Gamma(M)=\ytableausetup{centertableaux}
\ytableaushort
{\none\none\none\none\none\none,\none\none\none11,\none\none\none,\none\none\none,\none\none1,\none\none,\none1}
* {6,5,3,3,3,2,2}
$$
\end{ex}

\begin{rem}
The following observation is an~easy consequence of the combinatorial classification of objects in $\mathcal{H}_1$.
 Let $M,N\in\mathcal{H}_1$ be such that $\widehat{\Gamma}(M)=(\Gamma(M),m)$ and $\widehat{\Gamma}(N)=(\Gamma(N),n)$. Assume that $\beta^{\Gamma(M)}=\beta^{\Gamma(N)}$
 and $\gamma^{\Gamma(N)}\subseteq \gamma^{\Gamma(M)}$. If $P_1^n$ is a~direct summand of $M$ then $P_1^n$ is a~direct summand of $N$.
 \label{rem-direct-summand}
\end{rem}

For each pair $(M,N)$ of indecomposable objects in $\mathcal{H}_1(k)$ we determine
in the table below
the dimension of the $k$-space $\Hom_{\mathcal{H}}(M,N)$ of homomorphisms $M\to N$ in the category $\mathcal{H}$, see \cite[Lemma 4]{sch} and \cite{kos-sch}.

\begin{center}
\begin{tabular}{| c |
                   p{\dimexpr 0.25\linewidth-2\tabcolsep} |
                   p{\dimexpr 0.25\linewidth-2\tabcolsep} |
                   p{\dimexpr 0.25\linewidth-2\tabcolsep} | }\hline
\multicolumn{4}{|c|}{\bf Dimensions of Spaces $\Hom_{\mathcal{H}}(M,N)$}\\
\hline
  $\quad M\quad $ &  $N=P_0^m$ &  $P_1^m$ & $P_1^0$ \\
\hline \hline
$P_0^\ell$ & $\min\{\ell,m\}$ & $\min\{\ell,m\}$ &0\\
\hline
$P_1^\ell$ & $\min\{\ell -1,m\}$   & $\min\{\ell,m\}$ &1\\
\hline
$P_1^0$ &  0  & 0 & 1\\
\hline
\end{tabular}
\end{center}

\subsection{Generic extensions and the hom-order}

Let $M, N \in \mathcal H_1$. An object $U\in \mathcal H$ is an extension of $N$ by $M$ if there exists a~short exact
sequence of the form:
$$
0 \rightarrow  M \rightarrow U \rightarrow N \rightarrow 0.
$$

Note that the subcategory $\mathcal{H}_1\subseteq \mathcal{H}$ is not closed under extensions. In this paper we are interested
only in the extensions  that are objects of $\mathcal{H}_1$.

Since, for fixed integers $a,b$, the category $\mathcal{H}_a^b\subseteq \mathcal{H}_1$ has only finitely many isomorphism classes of indecomposable objects, the results of \cite{bongartz}, \cite{reineke} (see also \cite[Proposition 4.2]{kk-generic}) imply that given $M\in \mathcal H_a^b$ and $N\in \mathcal H_c^d$ there exists the~unique (up to iso) extension $U\in\mathcal{H}_{a+c}^{b+d}$ of
$N$ by $M$ with minimal dimension of its endomorphism ring ${\rm End}_{\mathcal{H}}(U)$. We call $U$ {\bf the generic extension}
of $N$ by $M$ and denote by $U=N*M$. 

On the set of isomorphism classes of objects in $\mathcal{H}_a^b$
one can define the classical degeneration partial order $\degleq$ that has a~geometric nature (see \cite{kk-generic} for details). Algorithmic approach to degeneration order one can find 
for example in \cite{kos-sch, mroz-zwara}. It is known that the generic extension
$U=N*M$ is the  unique $\degleq$-minimal extension of $N$ by $M$, see \cite{reineke, riedtmann}. It is easy to see that the category $\mathcal{H}_a^b$
is equivalent to a~subcategory of the category of modules
 over a~finite dimensional algebra, see \cite{kk-generic}.
Therefore by \cite{zwara}, the degeneration order is equivalent with hom-order $\homleq$,
because the category $\mathcal{H}_a^b$ has only finitely many isomorphism classes of indecomposable objects. It follows that the generic extension
$U=N*M$ is the extension of $N$ by $M$ that is minimal in hom-order.
In the paper instead of degeneration order we will use hom-order for testing whether an~extension is the generic extension. 

\begin{defin}
Fix natural numbers $a,b$. We say that $M,N\in\mathcal{H}_a^b$ are in the {\bf hom-order}, in symbols $M\homleq N,$
if $$[U,M]\leq [U,N]$$ for any object $U$ in $\mathcal{H}(k)$.
Here we write $[M,N]=\dim_k\Hom_{\mathcal{H}}(M,N)$ for $M,N\in \mathcal{H}$.
\end{defin}

By \cite[page 280]{riedtmann} $M\homleq N$ if and only if 
$$[M,U]\leq [N,U]$$ for any object $U$ in $\mathcal{H}(k)$.

\subsection{Properties of pickets}

We collect some elementary properties of pickets of the form $P_1^0$.
We collect some elementary properties of pickets of the form $P_1^0$.

\begin{lem}\label{lem_extzerowy}
Let $M$ be an~object of $\mathcal{S}_1$
and $N$ be an~object of $\mathcal{H}_1$. We have
\begin{enumerate}[{\rm (1)}]
\item $\Hom_{\mathcal{H}} (P_1^0,M)=0,$
\item $\Ext^1_{\mathcal{H}}(M,P_1^0)=0,$ where $\Ext^1_{\mathcal{H}}(M,P_1^0)=0$ is the first group of extensions of $M$ by $P_1^0$.  
\item if $f:M\oplus(P_1^0)^m\to N$ is a~monomorphism, then $(P_1^0)^m$ is a~direct summand of $N$,
\item $N*(M\oplus (P_1^0)^m)=(N*M)\oplus (P_1^0)^m$.
\end{enumerate}
\end{lem}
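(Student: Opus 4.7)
The proof breaks into the four parts naturally, with (1)--(3) being direct verifications and (4) being the substantive assertion, proved by combining (3) with the injectivity of $P_1^0$ in $\mathcal{H}_1$ (a consequence of (2)) plus the defining property of the generic extension.

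For (1), a morphism $P_1^0 = (N_{(1)},0,0) \to M$ in $\mathcal{H}$ is a pair $(\phi_1, 0)$, and the compatibility $f_M \phi_1 = 0$ together with $f_M$ being a monomorphism (since $M \in \mathcal{S}_1$) forces $\phi_1 = 0$. For (2), given an extension $0 \to P_1^0 \to U \to M \to 0$ in $\mathcal{H}_1$, the $N_\beta$-row collapses to $N_{\beta_U} \cong N_{\beta_M}$, while the $N_\alpha$-row $0 \to N_{(1)} \to N_{\alpha_U} \to N_{\alpha_M} \to 0$ is a short exact sequence of semisimple $k[T]$-modules (since $U \in \mathcal{H}_1$) and therefore splits; any $k[T]$-splitting $\sigma: N_{\alpha_M} \to N_{\alpha_U}$, paired with $\id_{N_{\beta_M}}$, gives a morphism $M \to U$ in $\mathcal{H}$ (the compatibility $f_U \sigma = f_M$ follows because the quotient $U \to M$ is the identity on the $N_\beta$ coordinate), and this morphism splits the extension. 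For (3), write $N = N' \oplus (P_1^0)^k$ with $N' \in \mathcal{S}_1$, so that $\Ker f_N = N_{(1)}^k$; the injectivity of the $N_\alpha$-component $f_1$ of the monomorphism $f$, combined with $f_N \circ f_1|_{(P_1^0)^m} = 0$, embeds $N_{(1)}^m$ into $N_{(1)}^k$ (forcing $m \le k$), and picking a $k$-linear complement of the image inside $N_{(1)}^k$ displays the image of $(P_1^0)^m$ as a direct summand of $N$.

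For (4), taking the direct sum of the generic SES $0 \to M \to N*M \to N \to 0$ with the trivial SES for $(P_1^0)^m$ makes $(N*M) \oplus (P_1^0)^m$ into an extension of $N$ by $M \oplus (P_1^0)^m$, so the remaining task is to prove hom-minimality. Let $W$ be any such extension in $\mathcal{H}_1$. Using (3) on the inclusion $M \oplus (P_1^0)^m \hookrightarrow W$ I decompose $W = W_0 \oplus S$, where $S$ is the image of the $(P_1^0)^m$ summand and is isomorphic to $(P_1^0)^m$. The inclusion $\iota_M: M \to W$ may have a nontrivial component $\iota_{M,S}: M \to S$; however, (2) combined with the direct check $\Ext^1_{\mathcal{H}_1}(P_1^0, P_1^0) = 0$ shows that $P_1^0$ is injective in $\mathcal{H}_1$, so $\iota_{M,S}$ extends along the injection $M \hookrightarrow W_0$ to some $h \in \Hom(W_0, S)$. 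Replacing $W_0$ by the graph $W_0' = \{w_0 + h(w_0) \mid w_0 \in W_0\}$ yields a new complement of $S$ with $\iota_M(M) \subseteq W_0'$, and quotienting the SES by the image of $M \oplus (P_1^0)^m$ then identifies $W_0'$ as an extension $0 \to M \to W_0' \to N \to 0$. Finally, genericity of $N*M$ gives $N*M \homleq W_0'$, so $(N*M) \oplus (P_1^0)^m \homleq W_0' \oplus S = W$. The main technical hurdle is this rearrangement of the direct sum decomposition of $W$, for which the injectivity of $P_1^0$ in $\mathcal{H}_1$ (via (2)) is essential.
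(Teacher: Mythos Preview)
Your proof is correct in all four parts, and parts (1) and (3) coincide with the paper's. Two differences deserve comment.

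For (2), the paper argues dually: rather than building a section $M \to U$, it shows the monomorphism $P_1^0 \hookrightarrow U$ splits. Any $U \in \mathcal{H}_1$ decomposes as $U' \oplus (P_1^0)^k$ with $U' \in \mathcal{S}_1$; by (1) the map $P_1^0 \to U$ has zero $U'$-component and is therefore a split monomorphism into the semisimple summand $(P_1^0)^k$. Both routes are equally short.

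For (4), your complement-adjustment via injectivity of $P_1^0$ is correct but unnecessary, and your claim that it is ``essential'' is not right. The paper simply quotients: once (3) gives $W = W_0 \oplus S$ with $S = \iota((P_1^0)^m)$, pass to $W/S \cong W_0$. Because $\iota$ is injective one has $\iota^{-1}(S) = 0 \oplus (P_1^0)^m$, so the induced map $M \to W/S$ is still a monomorphism, and its cokernel is $W/\iota(M \oplus (P_1^0)^m) \cong N$. This produces the sequence $0 \to M \to W_0 \to N \to 0$ directly, regardless of whether $\iota_M(M)$ sits inside the chosen complement $W_0$. From here the paper finishes by a contradiction on endomorphism dimensions rather than your direct $\homleq$ inequality, but that is cosmetic. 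Your detour through injectivity works, yet the quotient shortcut is simpler and sidesteps the somewhat delicate assertion that ``$P_1^0$ is injective in $\mathcal{H}_1$'' (a full subcategory that is not itself abelian, so that vanishing of $\Ext^1$ does not translate into an extension property without further argument).
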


\begin{proof}
Let $M=(N_\alpha,N_\beta,f)\in\mathcal{S}_1$.

(1)  Any homomorphism  $\phi=(\phi_1,\phi_2)\in\Hom (P_1^0,M)$ induces the following commutative diagram: 
$$\xymatrix{0\ar[r]^{\phi_2} & N_{\beta}\\
N_{(1)}\ar[u]\ar[r]^{\phi_1} & N_\alpha\ar[u]^{f}}$$ 
Therefore $\phi=(\phi_1,\phi_2)=0$, because $f$ is a~monomorphism .


(2) We consider an extension $U$ of $M$ by $P_1^0$:
$$\xymatrix{0\ar[r] & P_1^0\ar[r] & U \ar[r] & M\ar[r] &0}.$$ 
Since $[P_1^0,U]\neq 0$, by (1) the object $P_1^0$ is a~direct summand of $U$.
Finally $U\simeq P_1^0\oplus M$. 

The statement (3) is an~easy consequence of (1).

(4) Suppose that there exists  an extension $U\in\mathcal{H}_1$ of $N$ by $M\oplus(P_1^0)^m$ and such that 
\begin{equation}\label{lem_genzal0}
[U,U]<[(N*M)\oplus (P_1^0)^m,(N*M)\oplus (P_1^0)^m].
\end{equation}
Since there is a~monomorphism $M\oplus(P_1^0)^m\to U$, by (3) we have $U\simeq \overline{U}\oplus (P_1^0)^m$. It is easy to deduce that there is a~short exact sequence:
$$\xymatrix{& 0\ar[r]& M \ar[r] & \overline{U}\ar[r]& N\ar[r]& 0}.$$
Since $N*M$ is the generic extension of $N$ by $M$, we have $[N*M,N*M]\leq [\overline{U},\overline{U}]$
and $N*M\homleq \overline{U}$.
It follows that  $[N*M,(P_1^0)^m]\leq[\overline{U},(P_1^0)^m]$ and $[(P_1^0)^m,N*M]\leq[(P_1^0)^m,\overline{U}]$. Finally we get 
\begin{multline*}\label{lem_gennier}
[(N*M)\oplus(P_1^0)^m,(N*M)\oplus (P_1^0)^m]=\\ [N*M,N*M]+[(P_1^0)^m,N*M]+[N*M,(P_1^0)^m]+[(P_1^0)^m,(P_1^0)^m]\leq \\
[\overline{U},\overline{U}]+
[(P_1^0)^m,\overline{U}]+[\overline{U},(P_1^0)^m]+[(P_1^0)^m,(P_1^0)^m]=\\
[\overline{U}\oplus (P_1^0)^m,\overline{U}\oplus (P_1^0)^m]=[U,U],
\end{multline*}
This contradicts  (\ref{lem_genzal0}). We are done.
\end{proof}

\begin{lem}\label{lem-hom-order}
\begin{enumerate}
\item[{\rm (1)}] Let $U,V\in \mathcal{H}_1$ be such that $U\homleq V$. If $P_1^0$ is a~direct summand of $U$,
then $P_1^0$ is a~direct summand of $V$.
\item[{\rm (2)}] For $m>k$ we have $(P_0^m \oplus P_1^k)\homleq (P_1^m \oplus P_0^k)$.
\item[{\rm (3)}] For any $m>0$ we have $P_1^m \homleq (P_0^m \oplus P_1^0)$.
\end{enumerate}
\end{lem}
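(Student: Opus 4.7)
The plan is to verify each inequality by applying the definition of $\homleq$ to carefully chosen test objects and reading off the needed values from the Hom-table for pickets. Since $\dim_k \Hom_{\mathcal{H}}(U,-)$ is additive in direct sums and every object of $\mathcal{H}_1$ is a direct sum of pickets $P_0^\ell$, $P_1^\ell$ ($\ell\geq 1$), $P_1^0$, it suffices in (2) and (3) to check the defining inequality $[U,-]\leq [U,-]$ on these indecomposable test objects $U$.

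For (1), I would take $U := P_1^0$ as the test object in the definition of $\homleq$. The Hom-table gives $[P_1^0,P_0^m]=[P_1^0,P_1^m]=0$ for $m\geq 1$, while $[P_1^0,P_1^0]=1$. By the Krull--Schmidt decomposition into pickets, $[P_1^0,W]$ therefore equals the multiplicity of $P_1^0$ as a direct summand of $W$, for any $W\in\mathcal{H}_1$. The hypothesis $U\homleq V$ then gives $[P_1^0,U]\leq [P_1^0,V]$, so any $P_1^0$ summand of $U$ forces at least one in $V$.

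For (2), the cases $U=P_0^\ell$ and $U=P_1^0$ give equality on both sides directly from the table, so the only substantive case is $U=P_1^\ell$ with $\ell\geq 1$. There the required inequality reduces to
\begin{equation*}
\min\{\ell-1,m\}+\min\{\ell,k\}\;\leq\;\min\{\ell,m\}+\min\{\ell-1,k\},
\end{equation*}
equivalently $\min\{\ell,m\}-\min\{\ell-1,m\}\geq \min\{\ell,k\}-\min\{\ell-1,k\}$. The function $x\mapsto \min\{\ell,x\}-\min\{\ell-1,x\}$ takes the value $1$ if $\ell\leq x$ and $0$ otherwise, so the hypothesis $m>k$ makes the inequality automatic: if the right-hand difference is $1$ then $\ell\leq k<m$, forcing the left-hand difference to also equal $1$.

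For (3), the only non-trivial test is again $U=P_1^\ell$ with $\ell\geq 1$, where the desired estimate becomes $\min\{\ell,m\}\leq \min\{\ell-1,m\}+1$, a standard one-step bound; the cases $U=P_0^\ell$ yield equality and $U=P_1^0$ yields $0\leq 0+1$. I do not anticipate a real obstacle; the only subtlety is remembering to treat $P_1^0$ as a separate case from $P_1^\ell$ with $\ell\geq 1$, because its Hom-behaviour with the other pickets is qualitatively different, as reflected in the last row and column of the table.
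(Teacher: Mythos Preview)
Your argument for (1) coincides with the paper's: both use $[P_1^0,-]$ to detect $P_1^0$-summands via the Hom-table.

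For (2) and (3) your route is genuinely different from the paper's, and there is a gap in your reduction step. The definition of $\homleq$ in this paper requires $[U,M]\leq[U,N]$ for every object $U$ in $\mathcal{H}(k)$, not merely for $U\in\mathcal{H}_1$. Your sentence ``every object of $\mathcal{H}_1$ is a direct sum of pickets \dots\ it suffices to check on these indecomposable test objects'' only justifies the inequality for test objects in $\mathcal{H}_1$; but $\mathcal{H}$ contains many indecomposables outside $\mathcal{H}_1$ (indeed $\mathcal{H}$ is not representation-finite), so as written the verification is incomplete. The gap is repairable: for any $U=(N_\alpha,N_\beta,f)\in\mathcal{H}$ and any $M\in\mathcal{H}_1$, every morphism $U\to M$ factors uniquely through $\overline U=(N_\alpha/TN_\alpha,\,N_\beta,\,\overline f)\in\mathcal{H}_1$, since the first component of $M$ is semisimple; hence $[U,M]=[\overline U,M]$ and it really does suffice to test on $\mathcal{H}_1$-indecomposables. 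Once you insert this observation, your table computations are correct.

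The paper instead proves (2) and (3) by exhibiting short exact sequences
\[
0\to P_0^k\to P_0^m\oplus P_1^k\to P_1^m\to 0,\qquad
0\to P_0^m\to P_1^m\to P_1^0\to 0,
\]
and applying $\Hom_{\mathcal H}(W,-)$; left-exactness immediately yields $[W,P_0^m\oplus P_1^k]\leq[W,P_0^k]+[W,P_1^m]$ (and similarly for (3)) for \emph{every} $W\in\mathcal{H}$, so no reduction to $\mathcal{H}_1$ is needed. Your approach is more elementary and makes the inequalities explicit, at the cost of requiring the extra factoring argument above; the paper's approach is shorter and uniform in the test object.
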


\begin{proof}
(1) Assume that $P_1^0$ is a~direct summand of $U$. It follows that $[P_1^0,U]\neq 0$.
Since $U\homleq V$, we have 
$[P_1^0,V]\neq 0$. Therefore $P_1^0$ is a~direct summand of $V$, because $[P_1^0,W]=0$ for every $W\in\mathcal{S}_1$ (Lemma \ref{lem_extzerowy} (1)). 

(2) Let $W\in\mathcal{H}$. Applying the functor $\rm{Hom}_{\mathcal{H}}(W,-)$ to the following short exact sequence:
$$0\to P_0^k\to P^m_0\oplus P_1^k\to P_1^m\to 0$$ 
we obtain the  exact sequence:
$$0\to \rm{Hom}_{\mathcal{H}}(W,P_0^k)\to \rm{Hom}_{\mathcal{H}}(W,P_0^m\oplus P_1^k)\to \rm{Hom}_{\mathcal{H}}(W,P_1^m)\to {\rm Coker}(f)\to 0.$$
Thus $[W,P_0^m\oplus P_1^k]\leq[W,P_0^m\oplus P_1^k]+\dim_k {\rm Coker}(f)=[W,P_1^m]+[W,P_0^k]=  [W,P_1^m\oplus P_0^k]$ and $(P_0^m \oplus P_1^k)\homleq (P_1^m \oplus P_0^k)$.

(3) This follows by the same method as in the proof of (2). We have to apply the functor $\rm{Hom}_{\mathcal{H}}(W,-)$  to the following short exact sequence:
$$0\to P_0^m\to P^m_1\oplus P_0^k\to P_1^0\to 0.$$ 
\end{proof}

\section{The operation $*$ gives generic extensions}\label{sec-generic}

In this section we prove that operations defined in Section \ref{sec-def-alg} are equivalent to the operation of taking the generic extensions.

The following theorem is proved in \cite{kk-generic}.

\begin{thm}\label{thm_stare}
\begin{enumerate}
\item[{\rm (1)}] Let $M,N$ be objects of the category   
$\mathcal{S}_1$. 
The object $U\in\mathcal{S}_1$, where $\Gamma(U)=\Gamma(N)*\Gamma(M)$ is computed by Algorithm {\rm \ref{algorithm}} for $\Gamma(M),\Gamma(N)$, is the generic extension of $N$ by $M$,
i.e. $U=N*M$. 

\item[{\rm (2)}] The operation $*$ defined on $\mathcal{LR}_1\times \mathcal{LR}_1$
is associative.
\end{enumerate}
\end{thm}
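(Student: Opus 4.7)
The plan is to prove part (1) first, then derive part (2). Part (2) follows from part (1) together with associativity of the generic extension operation on isomorphism classes in $\mathcal{S}_1$, which is known by \cite{bongartz, reineke}: once we know $\Gamma(N*M) = \Gamma(N) * \Gamma(M)$, associativity of $*$ on $\mathcal{LR}_1$ transfers via the bijection between $\mathcal{LR}_1$ and isomorphism classes of objects in $\mathcal{S}_1$.

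For part (1), the approach is to induct on the number of picket summands of $M$. Since every object of $\mathcal{S}_1$ decomposes as a direct sum of pickets $P_0^m$ and $P_1^m$, the base case is $M$ a single picket. The inductive step uses the fact that the generic extension respects iterated construction: if $M = M_1 \oplus M_2$, then $N * M = (N * M_1) * M_2$, a standard property of generic extensions that can be proven by comparing filtrations. One must then verify the corresponding combinatorial identity for Algorithm \ref{algorithm}, namely that applying the algorithm to $\Gamma(M) = \Gamma(M_1) \cup \Gamma(M_2)$ gives the same output as applying it iteratively, first to $\Gamma(M_1)$ and then to $\Gamma(M_2)$.

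For the base case with $M$ a single picket $P_\ell^m$, the proof splits into existence and minimality. \emph{Existence}: one constructs an explicit short exact sequence $0 \to M \to U \to N \to 0$ in $\mathcal{S}_1$ with $\Gamma(U) = Z$, the output of Algorithm \ref{algorithm}. This is done row by row following the algorithm's prescription: Step 3 glues the picket to the top of row $i$ in the picket decomposition of $N$, while Steps 4 and 5 handle any 1-entry that cannot be placed in an existing row, either by grafting it onto a taller empty row or by creating a fresh $P_1^1$ summand. The horizontal strip condition $\gamma^Z_i \leq \beta^Z_i \leq \gamma^Z_i + 1$ holds because the counter $n_i$ tracks exactly how many 1-entries have been absorbed and must be re-emitted. \emph{Minimality}: for any other extension $V$ of $N$ by $M$, one shows $U \homleq V$, which by the $\Hom$-dimension table for pickets reduces to summand-wise combinatorial inequalities $[W, U] \leq [W, V]$ as $W$ ranges over pickets.

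The main obstacle is the placement of re-emitted 1-entries in Steps 4 and 5. The key structural inputs are Lemma \ref{lem-hom-order}(2)--(3), which identify the hom-minimal exchange rules for 1-entries among pickets of different lengths. Establishing that the algorithm's greedy rule produces the globally hom-minimal configuration, and not merely a locally optimal one, requires a careful exchange argument: one assumes an alternative extension $V$ differs from $U$, locates the first row in which the two tableaux diverge, and constructs an explicit test picket $W$ witnessing $[W, U] < [W, V]$. Making this rigorous when multiple 1-entries migrate between rows is expected to be the most delicate step.
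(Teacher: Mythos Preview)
The paper does not prove this theorem at all: immediately before the statement it says ``The following theorem is proved in \cite{kk-generic}.'' So there is nothing to compare against here; the result is imported wholesale from the earlier paper by the same authors, and the present paper uses it as a black box in the proofs of Proposition~\ref{prop_extP10}, Lemma~\ref{gen_lemmapull}, and Theorem~\ref{thm-main-gen}.

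That said, your proposed argument for part~(1) has a genuine gap. Your inductive step relies on the claim that for $M = M_1 \oplus M_2$ one has $N*M = (N*M_1)*M_2$, calling this ``a standard property of generic extensions that can be proven by comparing filtrations.'' This is false in $\mathcal{S}_1$. Take $N=0$ and $M_1=M_2=P_0^1$. Then $N*M = 0*(P_0^1\oplus P_0^1) = P_0^1\oplus P_0^1$, while $(N*M_1)*M_2 = P_0^1*P_0^1 = P_0^2$, since $P_0^2$ has endomorphism ring of dimension $2$ versus dimension $4$ for the direct sum. The filtration argument you sketch only shows that $(N*M_1)*M_2$ is an extension of $N$ by \emph{some} extension of $M_1$ by $M_2$, not by $M_1\oplus M_2$; when $\Ext^1_{\mathcal H}(M_1,M_2)\neq 0$ these differ. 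Consequently the induction on the number of picket summands of $M$ does not reduce cleanly to the single-picket case, and the base case together with the combinatorial identity you propose for Algorithm~\ref{algorithm} would not suffice. Your derivation of part~(2) from part~(1) via associativity of the generic-extension monoid is fine in principle, but it presupposes a correct proof of~(1).
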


We prove several facts that generalize this result.

\begin{prop}\label{prop_extP10}
Let $M\in\mathcal S_1$  and let $n\in \mathbb{N}$. 
Let $(\Gamma(U),u)=(\emptyset,n)*(\Gamma(M),0)$ be computed by Algorithm {\rm \ref{algorithm1}} for $\Gamma(M)$ and $n$.
The object $U\in\mathcal{H}_1$, such that $\widehat{\Gamma}(U)=(\Gamma(U),u)$,   is the generic extension of $(P_1^0)^n$ by $M$, i.e. $U=(P_1^0)^n*M$. 
\end{prop}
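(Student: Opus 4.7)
My proof plan proceeds in three steps: (i) describe the output $U$ of Algorithm~\ref{algorithm1} in terms of the picket decomposition of $M$; (ii) exhibit $U$ as an extension of $(P_1^0)^n$ by $M$ in $\mathcal{H}_1$; and (iii) show that $U$ is minimum in the hom-order $\homleq$ among all such extensions. Since by the discussion in Section~\ref{sec-invariant} the generic extension is characterized as the unique hom-order minimum, step (iii) concludes $U = (P_1^0)^n * M$.

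For (i), write $M \cong M^{P_1} \oplus \bigoplus_{i=1}^s P_0^{\ell_i}$, where $M^{P_1}$ collects the $P_1^\ell$-summands of $M$ with $\ell \geq 1$ and $\ell_1 \geq \ldots \geq \ell_s$. By Remark~\ref{rem-algorithm1}, Algorithm~\ref{algorithm1} fills the $\min\{s, n\}$ smallest $P_0^{\ell_i}$-pickets of $\Gamma(M)$ with $1$'s and returns $u = \max\{n - s, 0\}$; reading this through the picket dictionary of Section~\ref{section-pickets} gives
$$U \;\cong\; M^{P_1} \oplus \bigoplus_{i=1}^{s-n} P_0^{\ell_i} \oplus \bigoplus_{i=s-n+1}^{s} P_1^{\ell_i} \quad \text{if } n \leq s,$$
and $U \cong M^{P_1} \oplus \bigoplus_{i=1}^s P_1^{\ell_i} \oplus (P_1^0)^{n-s}$ if $n > s$. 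For (ii), the fundamental sequence $0 \to P_0^\ell \to P_1^\ell \to P_1^0 \to 0$ in $\mathcal{H}$ exists for each $\ell \geq 1$; taking its direct sum over the filled pickets, together with identity sequences on $M^{P_1}$ and on the unfilled $P_0^{\ell_i}$'s, plus trivial sequences $0 \to 0 \to P_1^0 \to P_1^0 \to 0$ for each of the $u$ leftover $P_1^0$'s, assembles into a short exact sequence $0 \to M \to U \to (P_1^0)^n \to 0$ in $\mathcal{H}_1$.

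For (iii), first classify the extensions. Let $V \in \mathcal{H}_1$ be an extension of $(P_1^0)^n$ by $M$ and write $V \cong V' \oplus (P_1^0)^c$ with $V' \in \mathcal{S}_1$. The short exact sequence forces $\beta^{V'} = \beta^M$ (since $(P_1^0)^n$ has empty $\beta$), so the multiset of picket sizes in $V'$ coincides with that of the $\ell \geq 1$ pickets of $M$. Moreover, $M^{P_1}$ persists as a direct summand of $V'$: the embedding on the $\beta$-side, $\phi_2\colon N_{\beta^M} \to N_{\beta^V}$, preserves cyclic orders and the socle filtration, so each $P_1^\ell$-generator of $M$ is carried to an $\alpha$-generator of $V$ whose $f_V$-image is a nonzero socle element of a $\beta$-summand of size exactly $\ell$ (in the spirit of Remark~\ref{rem-direct-summand}). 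Together with $|\alpha^V| = |\alpha^M| + n$, this shows
$$V \;\cong\; M^{P_1} \oplus \bigoplus_{i \in S} P_1^{\ell_i} \oplus \bigoplus_{i \notin S} P_0^{\ell_i} \oplus (P_1^0)^{n-|S|}$$
for some $S \subseteq \{1, \ldots, s\}$ with $|S| \leq n$, and conversely every such $V$ is realized by the direct-sum construction from step (ii) restricted to the filled subset.

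Finally, I transform $V$ into $U$ by finitely many moves that do not increase $\homleq$. If $|S| < \min\{s, n\}$, then $V$ has both an unfilled $P_0^{\ell_i}$ ($i \notin S$) and a free $P_1^0$; by Lemma~\ref{lem-hom-order}(3), replacing $P_0^{\ell_i} \oplus P_1^0$ with $P_1^{\ell_i}$ gives an extension $\homleq V$ with $|S|$ enlarged by one. Once $|S| = \min\{s, n\}$, if $S$ is not the bottom subset $\{s - |S| + 1, \ldots, s\}$, there exist $i \in S$ and $j \notin S$ with $i < j$ (so $\ell_i > \ell_j$); by Lemma~\ref{lem-hom-order}(2), replacing $P_1^{\ell_i} \oplus P_0^{\ell_j}$ with $P_0^{\ell_i} \oplus P_1^{\ell_j}$ gives an extension $\homleq V$. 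The first move strictly increases $|S|$ and the second strictly decreases $\sum_{i \in S} i$, so the process terminates at $U$; hence $U \homleq V$. The main technical obstacle I anticipate is the classification in step (iii), specifically the rigorous verification that $M^{P_1}$ persists as a direct summand of every extension $V$.
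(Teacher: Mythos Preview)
Your proposal is correct and follows essentially the same route as the paper: the fundamental sequence $0\to P_0^\ell\to P_1^\ell\to P_1^0\to 0$ to realize $U$ as an extension, the observation that $\beta$ is unchanged, the persistence of the $P_1$-summands of $M$ in any extension, and then Lemma~\ref{lem-hom-order}(2),(3) to push an arbitrary extension down to $U$ in the hom-order. Your explicit classification of extensions by a subset $S\subseteq\{1,\dots,s\}$ and the two ``moves'' is in fact more detailed than the paper, which simply invokes Remark~\ref{rem-algorithm1} and Lemma~\ref{lem-hom-order} at that point.

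The one place where the paper does something you do not is precisely the obstacle you flag. To show that $M^{P_1}$ persists in every extension $V$, the paper applies the Snake Lemma to the $3\times 3$ diagram induced on the level of $\alpha$-, $\beta$-, and $\gamma$-modules by the short exact sequence $0\to M\to V\to (P_1^0)^n\to 0$; this yields an exact sequence
\[
0\to N_{(1)^{n'}}\to N_{(1)^n}\to N_{\gamma^{\Gamma(M)}}\to N_{\gamma^{\Gamma(V)}}\to 0,
\]
and then \cite[(3.1), p.~185]{macd} gives $\gamma^{\Gamma(V)}\subseteq\gamma^{\Gamma(M)}$, so Remark~\ref{rem-direct-summand} applies directly. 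Your socle-filtration sketch points in the right direction but is not yet a proof: knowing that $f_V(\phi_1(a))$ is a nonzero socle element of $N_{\beta^V}$ does not by itself produce a $P_1^\ell$-direct summand of $V$ without an argument controlling the height of that element relative to a picket decomposition of $V$. Replacing your sketch with the Snake Lemma/Macdonald step closes the gap cleanly.
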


\begin{proof}
Let $U$ be the object  such that $\widehat{\Gamma}(U)=(\Gamma(U),u)=(\emptyset,n)*(\Gamma(M),0)$ is constructed by Algorithm \ref{algorithm1}.  Note that $$
0\rightarrow P_0^{c}\rightarrow P_1^{c}\rightarrow P_1^0\rightarrow 0
$$ is a~short exact sequence 
in the category $\mathcal{H}_1$.
Now, applying Remark \ref{rem-algorithm1}, it is easy to see that
 $U$ is an extension of $(P_1^0)^n$ by $M$.

 Let $M=P_{\varepsilon_1}^{k_1}\oplus\ldots \oplus P_{\varepsilon_s}^{k_s}\in \mathcal{S}_1$,
 where $k_1\geq k_2\geq \ldots\geq k_s\geq 1$ and $\varepsilon_i\in\{0,1\}$
 for all $i$. Set 
 $I=\{1,\ldots,s\}$, $I_0=\{i\in I\;\; ;\;\; \varepsilon_i=0\}$ and $I_1=I\setminus I_0$.
 Let $N$ be an arbitrary extension of $(P_1^0)^n$ by $M$ and let  $\widehat{\Gamma}(N)=(\Gamma(N),n').$ Note that $\bigoplus _{i\in I_1}P_1^{k_i}$ is a~direct summand of $N$.
 Indeed, the short exact sequence
 $$ 0\to M\to N\to (P_1^0)^n\to 0$$
 induces the following commutative diagram with exact rows and columns
 $$\xymatrix{ &0 & 0 & & \\
& N_{\gamma^{\Gamma(M)}}\ar[u]\ar[r] & N_{\gamma^{\Gamma(N)}}\ar[u]& &\\
0\ar[r]& N_{\beta^{\Gamma(M)}}\ar[r]\ar[u] & N_{\beta^{\Gamma(N)}}\ar[u]\ar[r] &0\ar[r] &0\\
0\ar[r] & N_{\alpha^{\Gamma(M)}}\ar[r]\ar[u] & N_{\alpha^{\Gamma(N)}}\ar[u]\ar[r] &N_{(1)^n}\ar[u]\ar[r] &0 \\
&0\ar[u] &N_{(1)^{n'}}\ar[u]\ar[r] & N_{(1)^n}\ar[u]}$$
where $n'\leq n$. It follows that $\beta^{\Gamma(M)}=\beta^{\Gamma(N)}$ and, by the Snake Lemma, we get the following exact sequence
$$
0\to N_{(1)^{n'}}\to  N_{(1)^n}\to N_{\gamma^{\Gamma(M)}}\to N_{\gamma^{\Gamma(N)}}\to 0
$$
Applying \cite[(3.1) page 185]{macd} we get $\gamma^{\Gamma(N)}\subseteq\gamma^{\Gamma(M)}$. Therefore
Remark \ref{rem-direct-summand} implies that the  object $\bigoplus _{i\in I_1}P_1^{k_i}$ is a~direct summand of $U$.
Now, applying Remark \ref{rem-algorithm1} and Lemma \ref{lem-hom-order} (b), (c) we 
can deduce that extension computed by Algorithm \ref{algorithm1} is minimal in hom-order among all extensions
of $(P_1^0)^n$ by $M$. We are done.
\end{proof}

The following result is a~simple consequence of Algorithm \ref{algorithm1}.

\begin{lem}
\label{lem_h1tos1}
Let $X,Y\in\mathcal{LR}_1$, $m\in \mathbb{N}$ and let $(\emptyset,m)*(X,0)=(Y,n) \in\mathcal {LR}_1\times \mathbb{N}$ for some $n\in\mathbb{N}$. Then $(Y,0)=(\emptyset,m-n)*(X,0)$.
\end{lem}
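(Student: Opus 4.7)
The plan is to lean entirely on Remark \ref{rem-algorithm1}, which gives a clean combinatorial description of what Algorithm \ref{algorithm1} actually does: if $X$ has $k$ empty columns and we feed in $(X, j)$, the algorithm fills the rightmost $\min\{k, j\}$ of them with a $1$ and returns the pair $(Y_j, \max\{0, j-k\})$, where $Y_j$ denotes the resulting LR-tableau. So the proof is essentially an unpacking of this description applied twice.

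First I would let $k$ be the number of empty columns of $X$. Applying Remark \ref{rem-algorithm1} to the hypothesis $(\emptyset, m)*(X,0) = (Y,n)$, I get that $n = \max\{0, m-k\}$ and $Y = Y_m$, the tableau obtained from $X$ by inserting a $1$ into the rightmost $\min\{k,m\}$ empty columns. A quick case split on whether $m \le k$ shows in both cases that
$$m - n = \min\{k, m\} \le k.$$

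Next I would apply Remark \ref{rem-algorithm1} a second time, now with input $(X, m-n)$. Since $m - n \le k$, the algorithm fills the rightmost $\min\{k, m-n\} = m-n$ empty columns of $X$ and returns $(Y_{m-n}, 0)$. But filling the rightmost $m-n = \min\{k,m\}$ empty columns of $X$ produces exactly the same tableau as in the first application, so $Y_{m-n} = Y_m = Y$. Therefore $(\emptyset, m-n)*(X,0) = (Y, 0)$, as required.

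There is no real obstacle here: the whole argument is essentially the observation that $\min\{k, \min\{k,m\}\} = \min\{k,m\}$, packaged through the column-filling description of the algorithm. The only point that demands a sentence of care is verifying that the \emph{same} set of empty columns of $X$ gets filled in both runs, which follows because the algorithm always selects the rightmost empty columns and the count $m - n$ is exactly the number used the first time.
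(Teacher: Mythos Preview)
Your proof is correct and is exactly the kind of elementary unpacking the paper has in mind: the paper itself offers no argument beyond the sentence ``The following result is a simple consequence of Algorithm \ref{algorithm1},'' and your use of Remark \ref{rem-algorithm1} to make that consequence explicit is precisely on target. The only computation needed is the identity $m-n=\min\{k,m\}$, which you verify correctly in both cases.
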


\begin{lem}  \label{gen_lemmapull}
Let $M,N\in \mathcal{S}_1$ and $n\in\mathbb{N}$. 
Let $(T,s)=(\emptyset,n)*(\Gamma(M),0)$ be computed by Algorithm {\rm \ref{algorithm1}}
and $Z=\Gamma(N)*T$ be computed by Algorithm {\rm \ref{algorithm}}.  The generic extension of $N\oplus (P_1^0)^n$ by $M$ is
$W$, where $\widehat{\Gamma}(W)=(Z,s)$.
\end{lem}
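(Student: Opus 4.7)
The plan is to realize $W$ as the generic extension of $N$ by the generic extension $U:=(P_1^0)^n*M$ and then read off its $\widehat{\Gamma}$-invariant using the tools already established, paralleling the way Algorithm \ref{algorithm2} is built from Algorithm \ref{algorithm1} followed by Algorithm \ref{algorithm}.

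First I would compute $\widehat{\Gamma}(W)$. Set $U:=(P_1^0)^n*M$. Proposition \ref{prop_extP10} gives $\widehat{\Gamma}(U)=(T,s)$, so $U\simeq U_0\oplus (P_1^0)^s$ with $U_0\in\mathcal{S}_1$ and $\Gamma(U_0)=T$. Define $W:=N*U$. Lemma \ref{lem_extzerowy}(4) yields $W\simeq (N*U_0)\oplus (P_1^0)^s$, and since $N,U_0\in\mathcal{S}_1$, Theorem \ref{thm_stare} identifies $N*U_0\in\mathcal{S}_1$ as the object whose LR-tableau equals $\Gamma(N)*T=Z$. Hence $\widehat{\Gamma}(W)=(Z,s)$.

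Next I would verify that $W$ really is an extension of $N\oplus (P_1^0)^n$ by $M$: splicing the sequences $0\to M\to U\to (P_1^0)^n\to 0$ (guaranteed by Proposition \ref{prop_extP10}) and $0\to U\to W\to N\to 0$ (the defining sequence for $W$) produces $0\to M\to W\to C\to 0$ with $C$ fitting into $0\to (P_1^0)^n\to C\to N\to 0$; Lemma \ref{lem_extzerowy}(2) forces this sequence to split, so $C\simeq N\oplus (P_1^0)^n$.

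The main obstacle is to show that $W\homleq V$ for every extension $V$ of $N\oplus (P_1^0)^n$ by $M$ in $\mathcal{H}_1$. Given such a $V$, pulling back the surjection $V\to N\oplus (P_1^0)^n$ along the canonical inclusion $(P_1^0)^n\hookrightarrow N\oplus (P_1^0)^n$ yields a subobject $V'\subseteq V$ with $0\to M\to V'\to (P_1^0)^n\to 0$ and $V/V'\simeq N$; Proposition \ref{prop_extP10} then gives $U\homleq V'$. The cleanest route to the conclusion is via monotonicity of the generic extension in the second argument, which would yield $W=N*U\homleq N*V'\homleq V$ (the last inequality just expresses that $N*V'$ is the hom-minimal extension of $N$ by $V'$, while $V$ is one such extension). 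If monotonicity is to be established by hand inside $\mathcal{H}_1$, I would argue structurally: Lemma \ref{lem-hom-order}(1) applied to $U\homleq V'$, combined with the dimension formulas for $[P_1^0,-]$, forces $V'$ to contain at least $s$ direct summands isomorphic to $P_1^0$; Lemma \ref{lem_extzerowy}(3) then promotes these to direct summands of $V$; Lemma \ref{lem_extzerowy}(4) peels the $(P_1^0)^s$ part off both $V$ and $W$; and the remaining comparison becomes a hom-order inequality in $\mathcal{S}_1$ to which Theorem \ref{thm_stare} (via the generic extension's minimality inside $\mathcal{S}_1$) applies. The technical work in this second route lies in setting up the structural decomposition carefully and keeping track of the multiplicities of $P_1^0$-summands throughout.
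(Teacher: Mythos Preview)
Your proposal is correct and follows the same blueprint as the paper: build $W$ by composing the two generic extensions, check it is an extension of $N\oplus(P_1^0)^n$ by $M$ via the $3\times 3$ diagram (your ``splicing''), and prove minimality by pulling back an arbitrary competitor along $(P_1^0)^n\hookrightarrow N\oplus(P_1^0)^n$ and invoking monotonicity of the generic extension in its second argument. The one organisational difference is that the paper first strips off the $(P_1^0)^s$ part: it uses Lemma~\ref{lem_h1tos1} to recognise your $U_0$ itself as the generic extension of $(P_1^0)^{n-s}$ by $M$, shows that $W_0:=N*U_0\in\mathcal S_1$ is the generic extension of $N\oplus(P_1^0)^{n-s}$ by $M$ (here any competitor $W'\homleq W_0$ and its pullback are forced into $\mathcal S_1$ by Lemma~\ref{lem-hom-order}(1), so the monotonicity lemma \cite[Lemma~5.8]{kk-generic} applies verbatim), and only afterwards proves that $W_0\oplus(P_1^0)^s$ is generic for the full $N\oplus(P_1^0)^n$ via a second, purely structural pullback argument. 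Your one-step route instead needs monotonicity of $N*(-)$ with respect to $\homleq$ on all of $\mathcal H_1$; this holds by the general Reineke/Bongartz theory, but the paper's detour through $\mathcal S_1$ lets it quote only the $\mathcal S_1$ version already established in \cite{kk-generic}. Your ``second route'' is essentially the paper's final paragraph.
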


\begin{proof}
Let $M\in\mathcal S_1$ and $N\in \mathcal S_1$. Consider $N''\simeq N\oplus (P_1^0)^n$. It follows from Lemma \ref{lem_extzerowy} (2) that $N''$ is the unique
extension  of $N$ by $(P_1^0)^n$.  By Proposition \ref{prop_extP10}, the generic extension $V''$ of $(P_1^0)^n$ by $M$ is such that 
$\widehat{\Gamma}(V'')=(\emptyset,n)*(\Gamma(M),0)$. 
 Let $\widehat{\Gamma}(V'')=(\Gamma(V),s)$ for suitable $V\in\mathcal{S}_1$. By Lemma \ref{lem_h1tos1}, we have $(\Gamma(V),0)=(\emptyset,n-s)*(\Gamma(M),0)$. Thus Proposition \ref{prop_extP10} implies that $V$ is the generic extension of $(P_1^0)^{n-s}$ by $M$. Fix a~short exact sequence

$$
\xymatrix{0\ar[r] & M\ar@{->}^f[r]& V  \ar@{->}[r] & (P_1^0)^{n-s} \ar[r] &0}
$$
 
By Lemma \ref{thm_stare}, the generic extension $W$ of $N$ by $V$ is given by 
$\Gamma(W)=\Gamma(N)*\Gamma(V)$. Fix a~short exact sequence

$$
\xymatrix{0\ar[r] & V\ar@{->}^g[r]& W  \ar@{->}[r] & N \ar[r] &0}
$$

 We show that $W\in \mathcal S_1$  is the generic extension of $N'$ by $M$, where $N'\simeq N\oplus (P_1^0)^{n-s}$.  The following diagram, induced by short exact sequences given above, proves that $W$ is an~extension of $N'$ by $M$:
$$\xymatrix{ &0\ar[d]&0\ar[d]&0\ar[d]&\\
0\ar[r] & M\ar[r]^f\ar@{=}[d] & V  \ar[r]\ar[d]^g & (P_1^0)^{n-s} \ar[r]\ar[d]
 &0\\
0\ar[r]& M\ar[r]^{g\circ f}\ar[d] & W \ar[r]\ar[d] & \overline{N} \ar[r]\ar[d] & 0\\
& 0\ar[r] & N \ar@{=}[r]\ar[d] & N \ar[r]\ar[d] & 0 \\
& & 0 & 0 & 
}$$

Note that $\overline{N}\simeq N'$, because
it follows from Lemma \ref{lem_extzerowy} (2) that $N'$ is the unique
extension  of $N$ by $(P_1^0)^{n-s}$. Consequently the last column splits.

Suppose for a~contrary that there exists an~extension $W'\in \mathcal H_1$
of $N'$ by $M$ such that $W\not\simeq W'$ and $[W',W']\leq[W,W]$.
In this case $W'\homleq W$. 
Since $W\in\mathcal S_1$,  by Lemma \ref{lem-hom-order} (1) we have $W'\in\mathcal S_1$.
Consider the diagram 
$$\xymatrix{ &0\ar[d]&0\ar[d]&0\ar[d]&\\
0\ar[r] & M\ar@{-->}[r]\ar@{=}[d] & V'  \ar@{-->}[r]\ar@{-->}[d] & (P_1^0)^{n-s} \ar[r]\ar[d]^{h_2} &0\\
0\ar[r]& M\ar[r]\ar[d] & W' \ar[r]^{h_1}\ar[d] & N' \ar[r]\ar[d] & 0\\
& 0\ar[r] & N \ar@{=}[r]\ar[d] & N \ar[r]\ar[d] & 0 \\
& & 0 & 0 & 
}$$
where $V'$ is the pullback of the morphisms $h_1$ and $h_2$.
Since $W'\in \mathcal{S}_1$, we have $V'\in \mathcal{S}_1$, because $[P_1^0,\mathcal{S}_1]=0$.
Since $V$ is the generic extension of $(P_1^0)^{n-s}$ by $M$ and $V'$ is an extension of $(P_1^0)^{n-s}$ by $M$, we have $V\degleq V'$ and by \cite[Lemma 5.8]{kk-generic} also $W=N*V\homleq  N* V'\homleq W'$. We proved that $W\homleq W'$ and $W'\homleq W$. This implies that $W\simeq W'$, because $\homleq$ is a~partial order.
This contradiction  proves that $W$
is the generic extension of $N'$ by $M$. 

It remains to prove that $ W\oplus (P_1^0)^s$ is the generic extension of $N''$ by $M$. Let $\overline{W}$ be the generic extension of $N''$ by $M$. 
We construct the following pullback diagram
$$\xymatrix{ &0\ar[d]&0\ar[d]&0\ar[d]&\\
0\ar[r] & M\ar@{-->}[r]\ar@{=}[d] & \overline{V}  \ar@{-->}[r]\ar@{-->}[d] & (P_1^0)^{n} \ar[r]\ar[d]^{h_2} &0\\
0\ar[r]& M\ar[r]\ar[d] & \overline{W} \ar[r]^{h_1}\ar[d] & N'' \ar[r]\ar[d] & 0\\
& 0\ar[r] & N \ar@{=}[r]\ar[d] & N \ar[r]\ar[d] & 0 \\
& & 0 & 0 & 
}$$
Since $V''$ is the generic extension of $(P_1^0)^n$ by $M$ and $(P_1^0)^s$ is a~direct summand of $V''$, we conclude that $(P_1^0)^s$ is a~direct summand of $\overline{V}$. It follows that
$(P_1^0)^s$ is a~direct summand of $\overline{W}$, because $[P_1^0,\mathcal{S}_1]=0$. Therefore
$\overline{W}\simeq \overline{W}'\oplus (P_0^1)^s$.
 We have $\overline{W}\homleq W\oplus (P_0^1)^s$ (because $\overline{W}$ is the generic extension of $N''$ by $M$), and it follows that
 $\overline{W}'\homleq W$. Since $W$ is the generic extension of $N'$ by $M$, we get $W\simeq\overline{W}'$ and consequently $\overline{W}\simeq W\oplus(P_1^0)^s$. We are done.
\end{proof}

\begin{lem}\label{lem_gen_S1}
Let $M\in \mathcal{S}_1$, $N\in\mathcal{H}_1$ and $m\in\mathbb{N}$. The generic extension of $N$ by $M\oplus (P_1^0)^m$ is $U\oplus (P_1^0)^m$, where $U$ is the generic extension of $N$ by $M$. Moreover, $\widehat{\Gamma}(U\oplus (P_1^0)^m)=(\Gamma(U),s+m),$ where $(\Gamma(U),s)=\widehat{\Gamma}(N)*(\Gamma(M),0)$.
\end{lem}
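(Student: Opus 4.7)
The statement splits into two independent claims, and my plan is to reduce the first to Lemma \ref{lem_extzerowy}(4) and the second to Lemma \ref{gen_lemmapull} together with the definition of Algorithm \ref{algorithm2}. For the first assertion, Lemma \ref{lem_extzerowy}(4) gives $N*(M\oplus (P_1^0)^m)=(N*M)\oplus (P_1^0)^m$ under exactly our hypotheses ($M\in\mathcal{S}_1$, $N\in\mathcal{H}_1$), so I would simply set $U:=N*M$ and read off the conclusion.

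For the tableau identity, I would first decompose $N\simeq N'\oplus (P_1^0)^n$ with $N'\in\mathcal{S}_1$, so that $\widehat{\Gamma}(N)=(\Gamma(N'),n)$. Next I would unroll Algorithm \ref{algorithm2} on the inputs $(\Gamma(M),0)$ and $(\Gamma(N'),n)$: its Step~1 produces $(T,s)=(\emptyset,n)*(\Gamma(M),0)$ via Algorithm \ref{algorithm1}, its Step~2 produces $Z=\Gamma(N')*T$ via Algorithm \ref{algorithm}, and the final output is $(Z,s+0)=(Z,s)$. At this point I would invoke Lemma \ref{gen_lemmapull}, which says precisely that the pair $(Z,s)$ so obtained equals $\widehat{\Gamma}(W)$, where $W$ is the generic extension of $N'\oplus (P_1^0)^n=N$ by $M$. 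By uniqueness of the generic extension we have $W\simeq U$, hence $\widehat{\Gamma}(U)=(Z,s)$, which justifies writing this pair as $(\Gamma(U),s)$ and matches the formula $(\Gamma(U),s)=\widehat{\Gamma}(N)*(\Gamma(M),0)$ of the statement.

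To finish I would peel off the $(P_1^0)^m$ summand: $\widehat{\Gamma}(U)=(\Gamma(U),s)$ means by definition that $U\simeq U''\oplus (P_1^0)^s$ for some $U''\in\mathcal{S}_1$ with $\Gamma(U'')=\Gamma(U)$, whence $U\oplus (P_1^0)^m\simeq U''\oplus (P_1^0)^{s+m}$ and therefore $\widehat{\Gamma}(U\oplus (P_1^0)^m)=(\Gamma(U''),s+m)=(\Gamma(U),s+m)$, as required. I do not expect a genuine obstacle: every nontrivial ingredient is already in place — Lemma \ref{lem_extzerowy}(4) supplies the module-theoretic splitting off of $(P_1^0)^m$, and Lemma \ref{gen_lemmapull} supplies the combinatorial identity for the $N'\in\mathcal{S}_1$ case — so the only remaining work is to align the two steps of Algorithm \ref{algorithm2} with the pair $(T,s)$ and $Z=\Gamma(N')*T$ appearing in Lemma \ref{gen_lemmapull}, and to record that direct-summing with $(P_1^0)^m$ merely shifts the second coordinate of $\widehat{\Gamma}$.
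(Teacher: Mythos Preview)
Your proposal is correct and follows exactly the route the paper takes: the paper's proof is a one-line reference to Lemma \ref{lem_extzerowy}(4) and Lemma \ref{gen_lemmapull}, and you have simply unpacked how those two results combine, including the decomposition $N\simeq N'\oplus(P_1^0)^n$ needed to match the $\mathcal{S}_1$-hypothesis in Lemma \ref{gen_lemmapull}. There is nothing to add.
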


\begin{proof}
It is a~direct consequence of Lemma \ref{lem_extzerowy} (4) and Lemma \ref{gen_lemmapull}.
\end{proof}

\begin{thm}\label{thm-main-gen}
Let $M,N\in\mathcal{H}_1$ be such that $\widehat{\Gamma}(M)=(\Gamma(M),m)$ and $\widehat{\Gamma}(N)=(\Gamma(N),n)$.
The generic extension $U=N*M\in \mathcal{H}_1$ is such that $\widehat{\Gamma}(U)=(\Gamma(N),n)*(\Gamma(M),m)$ is computed by Algorithm {\rm \ref{algorithm2}}.  
\end{thm}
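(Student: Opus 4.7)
The plan is to reduce to the two lemmas already established (Lemma \ref{lem_gen_S1} and Lemma \ref{gen_lemmapull}) by first peeling off the $(P_1^0)$-summands from both $M$ and $N$. By the Krull--Schmidt decomposition of objects in $\mathcal{H}_1$, we can write uniquely $M \simeq M' \oplus (P_1^0)^m$ with $M' \in \mathcal{S}_1$ and $N \simeq N' \oplus (P_1^0)^n$ with $N' \in \mathcal{S}_1$, so that by definition $\Gamma(M) = \Gamma(M')$ and $\Gamma(N) = \Gamma(N')$, and $\widehat{\Gamma}(M) = (\Gamma(M'), m)$, $\widehat{\Gamma}(N) = (\Gamma(N'), n)$. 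This is exactly the input form needed to apply the two lemmas in turn.

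The first step is to absorb the $(P_1^0)^m$ summand of $M$. Apply Lemma \ref{lem_gen_S1} to $M' \in \mathcal{S}_1$ and $N \in \mathcal{H}_1$: it gives that the generic extension $U = N * M = N * (M' \oplus (P_1^0)^m)$ has the form $U = U' \oplus (P_1^0)^m$, where $U'$ is the generic extension of $N$ by $M'$. Passing to $\widehat{\Gamma}$, this simply shifts the integer component by $+m$, which matches the $s + m$ appearing in step 3 of Algorithm \ref{algorithm2}.

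The second step is to compute $U' = N * M' = (N' \oplus (P_1^0)^n) * M'$. This is the exact setting of Lemma \ref{gen_lemmapull} (with $M'$ and $N'$ both in $\mathcal{S}_1$ and integer $n$). That lemma yields $\widehat{\Gamma}(U') = (Z, s)$, where $(T, s) = (\emptyset, n) * (\Gamma(M'), 0) = (\emptyset, n) * (\Gamma(M), 0)$ is produced by Algorithm \ref{algorithm1}, and $Z = \Gamma(N') * T = \Gamma(N) * T$ is produced by Algorithm \ref{algorithm}. These are precisely steps 1 and 2 of Algorithm \ref{algorithm2} applied to the inputs $(\Gamma(M), m)$, $(\Gamma(N), n)$. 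Combining with the previous paragraph, $\widehat{\Gamma}(U) = (Z, s + m)$, which is the claimed output of Algorithm \ref{algorithm2}.

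The theorem is thus a direct synthesis of the preceding lemmas, and the substantive work has already been done: Proposition \ref{prop_extP10} handles the case of extending by $(P_1^0)^n$, Lemma \ref{gen_lemmapull} glues this with a generic extension in $\mathcal{S}_1$ by a pullback argument, and Lemma \ref{lem_gen_S1} uses Lemma \ref{lem_extzerowy}(4) to handle a $(P_1^0)^m$ summand on the left. The only real obstacle in the present proof is a~careful verification that the bookkeeping of the integer component through Algorithm \ref{algorithm2} coincides with that through the composition of the two lemmas---namely that the $s$ computed in Lemma \ref{gen_lemmapull} is identical to the $s$ output in step 1 of Algorithm \ref{algorithm2}, and that the $+m$ contributed by Lemma \ref{lem_gen_S1} matches step 3. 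Both match by construction, so the proof amounts to recording these identifications.
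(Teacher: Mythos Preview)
Your proof is correct and follows essentially the same approach as the paper: decompose $M$ and $N$ into their $\mathcal{S}_1$-parts plus $(P_1^0)$-summands, use Lemma~\ref{lem_gen_S1} to peel off $(P_1^0)^m$ from $M$, then invoke Lemma~\ref{gen_lemmapull} to handle $(N'\oplus(P_1^0)^n)*M'$, and match the resulting data with the three steps of Algorithm~\ref{algorithm2}. Your exposition is in fact slightly more explicit than the paper's in recording the identifications $\Gamma(M)=\Gamma(M')$, $\Gamma(N)=\Gamma(N')$ and in tracking the integer component.
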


\begin{proof}
Let $M= M'\oplus(P_1^0)^m$ and $N= N'\oplus(P_1^0)^n$,
where $M',N'\in\mathcal{S}_1$. By Lemma \ref{lem_gen_S1}
we have 
$$
N*M=
((N'\oplus(P_1^0)^n)*M)\oplus (P_1^0)^m.
$$
Let $V=(N'\oplus(P_1^0)^n)*M$.
By Lemma \ref{gen_lemmapull}, to get $\widehat{\Gamma}(V)$ we have to
apply first Algorithm \ref{algorithm1} to $\Gamma(M)$ and $n$ and get $(T,s)$,
then we have to apply Algorithm \ref{algorithm} to $T$ and $\Gamma(N')$
to get $Z$. Finally, $\widehat{\Gamma}(V)=(Z,s)$ and 
$\widehat{\Gamma}(N*M)=(Z,s+m)=(\Gamma(N),n)*(\Gamma(M),m)$
\end{proof}

\begin{ex} We rewrite the last example of Section \ref{sec-def-alg} in terms of generic extensions. 
Let $N=P_1^0,\,M=P_0^1$ then we have
$$N*M=P_1^1\;\;\;{\rm and}\;\;\;M*M=P_0^2.$$
Moreover, it easy to check that
$$(N*M)*M=P_1^1*P_0^1=P_0^1\oplus P_1^1 \;\;{\rm and}\;\; N*(M*M)=P_1^0*P_0^2=P_1^2.$$
Therefore $*$ is non-associative.
\end{ex}

\end{document}